\newtheorem{theorem}{Theorem}[section]
\newtheorem{lemma}[theorem]{Lemma}
\theoremstyle{definition}
\newtheorem{definition}[theorem]{Definition}
\numberwithin{equation}{section}
\DeclareMathOperator{\ran}{ran}
\title{A note on dual Dedekind finiteness}
\author{Ruihuan Mao}
\address{School of Mathematics and Statistics\\
Wuhan University\\
No.~299 Bayi Road\\
Wuhan 430072\\
Hubei Province\\
People's Republic of China}
\email{rhmao2003@outlook.com}
\author{Guozhen Shen}
\address{School of Philosophy\\
Wuhan University\\
No.~299 Bayi Road\\
Wuhan 430072\\
Hubei Province\\
People's Republic of China}
\email{shen\_guozhen@outlook.com}
\thanks{Shen was partially supported by NSFC No.~12101466.}
\subjclass[2020]{Primary 03E35; Secondary 03E10, 03E25}
\keywords{dual Dedekind finiteness, permutation model, axiom of choice}
\begin{document}

\begin{abstract}
A set $A$ is dually Dedekind finite if every surjection from $A$ onto $A$ is injective;
otherwise, $A$ is dually Dedekind infinite. It is proved consistent with $\mathsf{ZF}$
(i.e., the Zermelo--Fraenkel set theory without the axiom of choice)
that there exists a family $\langle A_n\rangle_{n\in\omega}$ of sets such that, for all $n\in\omega$,
$A_n^n$ is dually Dedekind finite whereas $A_n^{n+1}$ is dually Dedekind infinite.
This resolves a question that was left open in [J. Truss, \emph{Fund. Math.} 84, 187--208 (1974)].
\end{abstract}

\maketitle

\section{Introduction}
In 1888, Dedekind~\cite{Dedekind1888} defined an infinite set as a set that is equinumerous with a proper subset of itself.
Such infinite sets are referred to as \emph{Dedekind infinite} today.
Sets that are not Dedekind infinite are called \emph{Dedekind finite} sets.
In the presence of the axiom of choice, a Dedekind finite set is the same thing as a finite set,
where a finite set is defined as one that is equinumerous with a natural number.
However, without the axiom of choice, there may exist a Dedekind-finite infinite set.

Beginning with Tarski~\cite{Tarski1924} and Levy~\cite{Levy1958}, various notions of finiteness have been
explored in the literature, particularly in~\cite{Truss1974,Spivsiak1988,Howard1989,Degen1994,Herrlich2011}.
Among these notions, two are particularly significant: ``power Dedekind finite'' and ``dually Dedekind finite,''
which are defined as follows.

\begin{definition}
Let $A$ be an arbitrary set.
\begin{enumerate}
  \item $A$ is \emph{power Dedekind finite} if $\mathscr{P}(A)$, the powerset of $A$, is Dedekind finite;
        otherwise, $A$ is \emph{power Dedekind infinite}.
  \item $A$ is \emph{dually Dedekind finite} if every surjection from $A$ onto $A$ is injective;
        otherwise, $A$ is \emph{dually Dedekind infinite}.
\end{enumerate}
\end{definition}

Every power Dedekind finite set is dually Dedekind finite, and every dually Dedekind finite set is Dedekind finite.
However, the converses of these implications cannot be proved in $\mathsf{ZF}$ (see~\cite{Truss1974}).
It is also proved in~\cite{Truss1974} that the classes of power Dedekind finite sets and Dedekind finite sets are
closed under unions and disjoint unions, respectively, and therefore are closed under products.
In contrast, although the class of dually Dedekind finite sets is closed under finite unions,
it may not be closed under products without the axiom of choice (see~\cite[Theorem~2(iii)]{Truss1974});
that is, it is consistent with $\mathsf{ZF}$ that there exist dually Dedekind finite sets $A,B$ such that
$A\times B$ is dually Dedekind infinite.

It is natural to ask whether the class of dually Dedekind finite sets is closed under powers.
More precisely, if $A^n$ is dually Dedekind finite, must $A^{n+1}$ also be dually Dedekind finite?
After~\cite{Truss1974}, various properties of dually Dedekind finite sets have been studied
in~\cite{Degen1994,Goldstern1997,Herrlich2015,Panasawatwong2023}, but the above question remains open.
In this article, we provide a negative solution to the above question.

\begin{theorem}\label{msmain}
It is consistent with $\mathsf{ZF}$ that there exists a family $\langle A_n\rangle_{n\in\omega}$ of sets such that,
for all $n\in\omega$, $A_n^n$ is dually Dedekind finite whereas $A_n^{n+1}$ is dually Dedekind infinite.
\end{theorem}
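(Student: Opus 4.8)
The plan is to build a permutation (Fraenkel--Mostowski) model of $\mathsf{ZFA}$ carrying such a family and then transfer to a model of $\mathsf{ZF}$ by the Jech--Sochor embedding theorem; for a fixed set $A$ the assertions ``$A^{n}$ is dually Dedekind finite'' and ``$A^{n+1}$ is dually Dedekind infinite'' quantify only over subsets of $A^{n}\times A^{n}$, resp.\ $A^{n+1}\times A^{n+1}$, so they are boundable and the transfer is routine. The case $n=0$ is trivial: take $A_0$ to be (the model's) $\omega$, so that $A_0^{0}$ is a singleton, hence dually Dedekind finite, while $A_0^{1}=A_0$ is dually Dedekind infinite. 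So fix $n\geq 1$. As a preliminary I would record the routine equivalence that a set $X$ is dually Dedekind infinite if and only if there is a surjection of $X$ onto $X\cup\{*\}$ (disjoint union with one point); the backward direction — all that will be used — sends such a $\sigma$ to the map agreeing with $\sigma$ where $\sigma(x)\neq *$ and fixing $x$ where $\sigma(x)=*$, which is a non-injective surjection $X\to X$, and is symmetric if $\sigma$ is.

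For the model, the atoms are split into sorts, one for each $n\geq 1$. The sort-$n$ atoms carry a relational structure $\mathcal{A}_n$ with automorphism group $G_n$; the group of the model is the product of the $G_n$ acting coordinatewise, and supports are finite sets of atoms. The structure $\mathcal{A}_n$ is designed to realize two opposed features. First, it is homogeneous at dimensions $\leq n$: $G_n$ is transitive on $n$-tuples of distinct sort-$n$ atoms, and for every finite tuple $\bar{a}$ of distinct atoms the pointwise stabilizer $(G_n)_{\bar{a}}$ acts on the remaining atoms with all orbits infinite, so it has no fixed atoms beyond $\bar{a}$ (in particular $A_n:=\{\text{sort-}n\text{ atoms}\}$, and hence each $A_n^{k}$, is Dedekind finite). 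Second, it carries a ``shift'' at dimension $n+1$: for an $n$-tuple $\bar{a}$ of distinct atoms the orbits of $(G_n)_{\bar{a}}$ on the remaining atoms are indexed canonically by $\omega$, and there is a $G_n$-equivariant family of surjections from the $(k+1)$-th orbit onto the $k$-th. Concretely, inside $A_n^{n+1}$ the symmetric set $C$ of generic ordered $(n+1)$-tuples splits canonically as $C=\bigsqcup_{k\in\omega}C_k$, each $C_k$ a single $G_n$-orbit, with canonical symmetric surjections $p_k\colon C_k\twoheadrightarrow C_{k-1}$ for $k\geq 1$, and the structure is ``large enough'' that the $p_k$ are not all bijective.

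Granting such an $\mathcal{A}_n$, the two claims follow. For $A_n^{n+1}$: let $\sigma$ be $p_k$ on $C_k$ for $k\geq 1$, the constant map to $*$ on $C_0$, and the identity on $A_n^{n+1}\setminus C$. Then $\sigma$ is symmetric and is a surjection of $A_n^{n+1}$ onto $A_n^{n+1}\cup\{*\}$ — it covers $A_n^{n+1}\setminus C$ by the identity, covers $*$ from $C_0$, and covers all of $C$ via the $p_k$, whose images run through $C_0,C_1,C_2,\dots$ — so $A_n^{n+1}$ is dually Dedekind infinite. For $A_n^{n}$: let $f\colon A_n^{n}\to A_n^{n}$ be a surjection in the model, supported by a finite set $E$ of atoms. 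The key is a rigidity lemma: if $\bar{a}=(a_1,\dots,a_n)$ has distinct entries, none of which lies in $E$, then every entry of $f(\bar{a})$ is one of the $a_i$ or one of the (finitely many) atoms of $E$, since $(G_n)_{E\cup\{a_1,\dots,a_n\}}$ still acts with all orbits infinite on the complement of $E\cup\{a_1,\dots,a_n\}$ and so has no fixed atoms there. It follows that $f$ restricted to the generic part of $A_n^{n}$ is a bijection of it onto itself (on each class of coordinate reorderings $f$ must be onto, hence bijective), while no non-generic tuple is sent into the generic part; stratifying the rest of $A_n^{n}$ according to which coordinates coincide or lie in $E$ and inducting on $n$ then shows $f$ is injective throughout, so $A_n^{n}$ is dually Dedekind finite.

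The main obstacle is the construction of $\mathcal{A}_n$: one needs an atom-structure that is homogeneous enough at dimensions $\leq n$ for the support argument to collapse every self-surjection of $A_n^{n}$ to a bijection (and to keep $A_n$ Dedekind finite), yet carries at dimension $n+1$ the canonical $\omega$-indexed orbit family with a coherent shift. These requirements pull against each other: excess homogeneity at dimension $n+1$ would make $A_n^{n+1}$ dually Dedekind finite, whereas any shift already visible at a dimension $\leq n$ — for instance a canonical unary or $\leq n$-ary surjection between pieces of $A_n$ — would make $A_n$ itself dually Dedekind infinite, destroying the case $k\leq n$. Producing a structure that threads between these (e.g.\ a suitably ``generic'' $(n+1)$-ary structure on a tree-like or Fra\"iss\'e-type family of atoms), and then carrying out the transitivity computations and the rigidity lemma in detail, is where the real work lies; everything else is bookkeeping with supports together with the standard Jech--Sochor transfer.
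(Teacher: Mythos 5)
Your high-level skeleton matches the paper's: a permutation model whose atoms of sort $n$ carry an $(n{+}1)$-ary ``shift'' that witnesses dual Dedekind infiniteness of $A_n^{n+1}$ but is invisible to $n$-fold products, followed by a Jech--Sochor transfer (your preliminary reduction to surjections onto $X\cup\{*\}$ and your treatment of $n=0$ are both fine). However, the proposal has a genuine gap, and you have located it yourself: the structure $\mathcal{A}_n$ is never constructed. Everything of substance in the theorem lives in that construction and in the verification that the shift does not ``leak'' down to arity $n$; listing the desiderata (transitivity on $n$-tuples of distinct atoms, stabilizers with no finite orbits, a canonical $\omega$-indexed orbit family at arity $n+1$ with equivariant non-injective shifts) and noting that they pull against each other is a statement of the problem, not a solution. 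Indeed your desiderata are very demanding --- an equivariant surjection between consecutive orbits of $(G_n)_{\bar a}$ forces strong containments of point stabilizers --- and you offer no candidate structure, so there is no way to check that the package is even consistent.

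There is also a soft spot in the argument you do sketch for $A_n^n$. Your rigidity lemma only controls $f$ on tuples of \emph{distinct} atoms avoiding the support; the residual claim that ``stratifying the rest \dots and inducting on $n$ then shows $f$ is injective throughout'' does not engage with the real danger, which is that a self-surjection of $A_n^n$ might exploit partial information from the $(n{+}1)$-ary shift on products of the atomic blocks taken \emph{with repetition}. This is exactly what the paper's key lemma is built to exclude, and it does so by entirely different means: the atoms of sort $n$ are strings over $\{0,\dots,n\}$ partitioned into $n+1$ blocks $A_{n,0},\dots,A_{n,n}$ by first letter; the shift is an explicit surjection $f_n$ on $\prod_{i\leqslant n}A_{n,i}$ in which each output coordinate depends on \emph{all} the other input coordinates modulo $n+1$; and the dual Dedekind finiteness of each $\prod_{i\leqslant n}A_{n,i}^{e_i}$ with some $e_i=0$ (hence of $A_n^n$, a finite union of such products) is proved not by homogeneity but by combining Truss's structural lemma on surjections $X\to X\cup\{o\}$ for countable unions of power Dedekind finite sets with a well-quasi-ordering (Dickson's lemma) argument and a bespoke permutation that commutes with $f_n$ precisely because the missing $n$-th block lets one compensate a twist at level $p_{0,0}(q)-1$ in the other blocks by a twist one position deeper in block $n$. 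Without either your structure or some substitute for this mechanism, the proposal does not establish the theorem.
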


\section{The main construction}
We shall employ the method of permutation models.
We refer the reader to~\cite[Chap.~8]{Halbeisen2017} or~\cite[Chap.~4]{Jech1973}
for an introduction to the theory of permutation models.
Permutation models are not models of $\mathsf{ZF}$;
they are models of $\mathsf{ZFA}$ (the Zermelo--Fraenkel set theory with atoms).
We shall construct a permutation model in which the set of atoms
is a disjoint union $A=\bigcup_{n\in\omega}A_n$, such that for every $n\in\omega$,
$A_n^n$~is dually Dedekind finite, whereas $A_n^{n+1}$ is dually Dedekind infinite.
Then, by the Jech--Sochor transfer theorem (see~\cite[Theorem~17.2]{Halbeisen2017} or~\cite[Theorem~6.1]{Jech1973}),
we arrive at Theorem~\ref{msmain}.

We construct the set $A$ of atoms as follows:
\[
A=\bigcup_{n\in\omega}A_n,
\]
where
\[
A_n=\{a_{n,s}\mid s\text{ is a nonempty finite string of numbers }0,\dots,n\}.
\]
For simplicity, we write $[s]_n$ for $a_{n,s}$. For each $n\in\omega$ and each $i\leqslant n$, let
\[
A_{n,i}=\{[i^\smallfrown s]_n\mid s\text{ is a finite string of numbers }0,\dots,n\}.
\]
In order to construct the group $\mathcal{G}$ of permutations of $A$ needed for the permutation model, we define,
for each $n\in\omega$, a function $f_n:\prod_{i\leqslant n}A_{n,i}\to\{\varnothing\}\cup\prod_{i\leqslant n}A_{n,i}$ as follows:
\begin{align*}
  f_n([0j_0]_n,\dots,[nj_n]_n) & =\varnothing, \\
  f_n([0^\smallfrown s_0{}^\smallfrown k_0l_0]_n,\dots,[n^\smallfrown s_n{}^\smallfrown k_nl_n]_n) &
  =([0^\smallfrown s_0{}^\smallfrown m_0]_n,\dots,[n^\smallfrown s_n{}^\smallfrown m_n]_n),
\intertext{where $s_0,\dots,s_n$ are finite strings of the same length, and for every $i\leqslant n$, $m_i\equiv\sum_{j\neq i}k_j\pmod{n+1}$,}
  f_n(x) & =x,\quad\text{otherwise.}
\end{align*}
It is easy to verify that $f_n$ is a surjection for all $n\in\omega$.

Every permutation $\pi$ of $A$ extents recursively to a permutation of the universe by
\[
\pi x=\{\pi z\mid z\in x\}.
\]
Now, let $\mathcal{G}$ be the group of all permutations $\pi$ of $A$ such that for every $n\in\omega$,
\begin{enumerate}
  \item $\pi$ fixes $A_{n,i}$ setwise for all $i\leqslant n$,
  \item for all $[s]_n\in A_n$, if $\pi[s]_n=[t]_n$, then $s$ and $t$ have the same length,
  \item $\pi f_n=f_n$.
\end{enumerate}
Then $x$ belongs to the permutation model $\mathcal{V}$ determined by $\mathcal{G}$ if and only if
$x\subseteq\mathcal{V}$ and $x$ has a \emph{finite support}, that is, a finite subset $E\subseteq A$
such that every permutation $\pi\in\mathcal{G}$ fixing $E$ pointwise also fixes $x$.
Note that, for every $n\in\omega$ and every $i\leqslant n$, $A_{n,i}$ and $f_n$ are
fixed by every permutation in $\mathcal{G}$, so $A_{n,i},f_n\in\mathcal{V}$.

\begin{lemma}\label{sh21}
In $\mathcal{V}$, for every $n\in\omega$, $\prod_{i\leqslant n}A_{n,i}$ is dually Dedekind infinite.
\end{lemma}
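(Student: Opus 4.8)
The plan is to exhibit, inside $\mathcal{V}$, an explicit surjection from $\prod_{i\leqslant n}A_{n,i}$ onto itself that fails to be injective; by definition this witnesses that the product is dually Dedekind infinite. The natural candidate is already essentially present in the construction: the map $f_n$ is a surjection from $\prod_{i\leqslant n}A_{n,i}$ onto $\{\varnothing\}\cup\prod_{i\leqslant n}A_{n,i}$, so it is ``one value too large'' but otherwise close to what is wanted. I would fix a convenient element $p\in\prod_{i\leqslant n}A_{n,i}$ — concretely $p=([0]_n,[1]_n,\dots,[n]_n)$ — and define $g_n\colon\prod_{i\leqslant n}A_{n,i}\to\prod_{i\leqslant n}A_{n,i}$ to agree with $f_n$ wherever $f_n$ takes a value in the product, and to take the value $p$ wherever $f_n$ takes the value $\varnothing$.

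Then I would check three things. First, $g_n$ is onto: every $y\in\prod_{i\leqslant n}A_{n,i}$ already has an $f_n$-preimage $x$ since $f_n$ is a surjection, and on such an $x$ we have $g_n(x)=f_n(x)=y$. Second, $g_n$ is not injective: by the first clause in the definition of $f_n$ we have $f_n([00]_n,\dots,[n0]_n)=\varnothing$, hence $g_n([00]_n,\dots,[n0]_n)=p$; on the other hand $p$ itself falls under the ``otherwise'' clause of $f_n$ (each entry $[i]_n$ has a string of length one), so $f_n(p)=p$ and therefore $g_n(p)=p$; since the tuples $p$ and $([00]_n,\dots,[n0]_n)$ are distinct, $g_n$ is not one-to-one. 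Third, $g_n\in\mathcal{V}$: it suffices to show $\pi g_n=g_n$ for every $\pi\in\mathcal{G}$, since then $g_n$ has empty support and, being a set of ordered pairs of finite tuples of atoms, is a subset of $\mathcal{V}$. This reduces to the identity $g_n(\pi x)=\pi g_n(x)$, which follows from $f_n(\pi x)=\pi f_n(x)$ (because $\pi f_n=f_n$), from $\pi\varnothing=\varnothing$, and from $\pi p=p$ — the last because $\pi$ preserves each $A_{n,i}$ setwise and preserves string lengths, hence must fix the unique length-one element $[i]_n$ of $A_{n,i}$.

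I do not expect a genuine obstacle: the technical content has been front-loaded into the design of $f_n$, whose surjectivity is already granted and whose $\mathcal{G}$-invariance is built into the definition of $\mathcal{G}$. The only step requiring a little care is the last one — that replacing the spurious value $\varnothing$ by the $\mathcal{G}$-fixed tuple $p$ does not destroy $\mathcal{G}$-invariance — and even this comes down to the observations that $\pi\varnothing=\varnothing$ and $\pi p=p$, so that the set of $x$ with $f_n(x)=\varnothing$ is itself $\mathcal{G}$-invariant. I would also note in passing, so that the statement makes sense, that $\prod_{i\leqslant n}A_{n,i}$ and all the tuples involved indeed belong to $\mathcal{V}$, which is routine since a finite tuple of atoms has its finite set of entries as a support.
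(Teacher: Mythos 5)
Your proof is correct and takes essentially the same route as the paper, which simply declares that the surjection $f_n\colon\prod_{i\leqslant n}A_{n,i}\to\{\varnothing\}\cup\prod_{i\leqslant n}A_{n,i}$ witnesses dual Dedekind infiniteness; you merely spell out the standard (and routine) conversion of such a surjection onto a one-point extension into a non-injective self-surjection, together with the check that the resulting map lies in $\mathcal{V}$. All the details you supply (the choice of $p$, the two distinct preimages of $p$, and $\pi p=p$ from length-preservation) are accurate.
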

\begin{proof}
The surjection $f_n$ witnesses the dual Dedekind infiniteness of $\prod_{i\leqslant n}A_{n,i}$.
\end{proof}

The key step in the proof is the following lemma.

\begin{lemma}\label{sh22}
In $\mathcal{V}$, for every $n\in\omega$ and every $e_0,\dots,e_n\in\omega$,
if at least one of $e_0,\dots,e_n$ is zero, then $\prod_{i\leqslant n}A_{n,i}^{e_i}$ is dually Dedekind finite.
\end{lemma}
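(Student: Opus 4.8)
The plan is to fix $n$ and a surjection $g\in\mathcal{V}$ of $P:=\prod_{i\leqslant n}A_{n,i}^{e_i}$ onto itself (where one of the $e_i$ vanishes) and to show $g$ is injective. First I would reduce to the case $e_n=0$. For a permutation $\sigma$ of $\{0,\dots,n\}$, the bijection of $A$ that relabels only the \emph{leading} digit of each string in $A_n$ by $\sigma$ and fixes every other atom commutes with all the $f_m$ — the digits entering the sums $\sum_{j\neq i}k_j$ in the definition of $f_n$ are never leading digits — and normalizes $\mathcal{G}$, hence induces an automorphism of $\mathcal{V}$ carrying $A_{n,i}$ to $A_{n,\sigma(i)}$; so $\prod_{i\leqslant n}A_{n,i}^{e_i}$ is equinumerous in $\mathcal{V}$ with $\prod_{i\leqslant n}A_{n,i}^{e_{\sigma^{-1}(i)}}$, and we may assume $e_n=0$. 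Having fixed $g$, I would fix a finite support $E$ for it; since $P$ is built purely from atoms of $A_n$ and $\mathcal{G}$ is the direct product of the groups of restrictions of its elements to the individual $A_m$, a routine argument lets one take $E\subseteq A_n$, and I write $N$ for a bound on the lengths of the strings indexing atoms of $E$.

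The heart of the argument, and the step I expect to be the main obstacle, is a structure theorem for the action of $\mathcal{G}$ on $A_n$. By unwinding the identity $\pi f_n=f_n$ level by level, I aim to show that every $\pi\in\mathcal{G}$ acts on $A_n$ affinely, digit by digit: there are units $\alpha_1,\alpha_2,\dots$ of $\mathbb{Z}/(n+1)$ and elements $\beta_{i,k}\in\mathbb{Z}/(n+1)$ ($i\leqslant n$, $k\geqslant1$) with
\[
\pi[i,d_1,\dots,d_m]_n=[\,i,\ \alpha_1 d_1+\beta_{i,1},\ \dots,\ \alpha_{m-1}d_{m-1}+\beta_{i,m-1},\ \alpha_m d_m+\textstyle\sum_{r\neq i}\beta_{r,m}\,]_n,
\]
all arithmetic modulo $n+1$, and conversely every such choice of parameters defines an element of $\mathcal{G}$. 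The point of the verification is that the summand $\sum_{j\neq i}k_j$ in $f_n$ forces the multiplier of the $k$-th digit to be common to all coordinates, and forces the ``twisted'' shift $\sum_{r\neq i}\beta_{r,m}$ on the final digit of a length-$(m+1)$ string.

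With this in hand I would single out, for $c\in\mathbb{Z}/(n+1)$ and $k\geqslant1$, the element $\pi_{c,k}\in\mathcal{G}$ having all $\alpha_\ell\equiv1$ and all shifts zero except $\beta_{n,k}=c$. From the formula, $\pi_{c,k}$ fixes every string of length $\leqslant k$; on each $A_{n,i}$ with $i<n$ it adds $c$ to the last digit of every length-$(k+1)$ string and fixes all longer strings; and among strings of length $>k+1$ it moves only those in $A_{n,n}$. Hence for $k\geqslant N$ it fixes $E$ pointwise, so commutes with $g$; and since $P$ involves no atom of $A_{n,n}$ — this is exactly where $e_n=0$ is used — the element $\pi_{c,k}$ fixes a given $p\in P$ precisely when $p$ has no atom of length $k+1$. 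Because any $\pi$ fixing $E$ and $p$ fixes $g(p)$ as well, applying the $\pi_{c,k}$ with $c\neq0$ yields, writing $L(x)$ for the set of lengths of atoms occurring in a tuple $x$ and $\operatorname{ht}(x)=\max L(x)$,
\[
g(p)=q\ \Longrightarrow\ L(q)\cap(N,\infty)\subseteq L(p)\cap(N,\infty);
\]
in particular $g$ cannot raise $\operatorname{ht}$ above $N$. A companion observation — a finite set of atoms of length $\leqslant N$ can support only tuples all of whose atoms have length $\leqslant N$, again by these $\pi_{c,k}$ — shows $g$ maps $\{p\in P:\operatorname{ht}(p)\leqslant N\}$ into itself.

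Finally I would run a counting argument on the \emph{finite} sets $P_{\leqslant D}=\{p\in P:\operatorname{ht}(p)\leqslant D\}$ for $D\geqslant N$. By the last paragraph $g(P_{\leqslant D})\subseteq P_{\leqslant D}$, and I would show this restriction is onto $P_{\leqslant D}$: otherwise some $q\in P_{\leqslant D}$ has all its $g$-preimages of height $>D$, and feeding a preimage of minimal height to the $\pi_{c,k}$'s together with finitely many further automorphisms of the same shape — enough to see that the multiset of (length, coordinate)-pairs of the atoms of $p$ of length $>N$ cannot strictly dominate that of $q=g(p)$, so that $q$ does have a preimage of height $\operatorname{ht}(q)$ — contradicts surjectivity. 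A surjection of a finite set onto itself is a bijection, so $g$ is injective on each $P_{\leqslant D}$, and since $P=\bigcup_D P_{\leqslant D}$, $g$ is injective, proving Lemma~\ref{sh22}. (For a single factor, say $P=A_{n,i}$, the structure theorem already gives directly that $g$ preserves the length of every sufficiently long atom, and the counting argument collapses to a single length-shell at a time.)
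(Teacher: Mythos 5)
Your overall architecture (reduce to $e_n=0$, describe $\mathcal{G}$ explicitly, exploit the permutations that shift a single digit-level, and conclude by a counting argument on height-bounded shells) is a genuinely different route from the paper's, and several of its ingredients are sound: the reduction to $e_n=0$, the permutations $\pi_{c,k}$ (the paper's $\sigma$ is exactly your $\pi_{1,k}$ with $k=p_{0,0}(q)$), and the deduction that $L(g(p))\cap(N,\infty)\subseteq L(p)\cap(N,\infty)$, hence $g[P_{\leqslant D}]\subseteq P_{\leqslant D}$. But there is a genuine gap at the step you yourself flag as the crux, namely that $g\restriction P_{\leqslant D}$ is \emph{onto} $P_{\leqslant D}$. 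Everything you can extract from equivariance is of the form $\mathrm{Stab}(p)\subseteq\mathrm{Stab}(g(p))$ (any $\pi\in\mathcal{G}$ fixing $E$ and $p$ fixes $g(p)$): this bounds the complexity of $g(p)$ by that of $p$ and is why your height-decreasing inclusion works. The onto-ness of $g\restriction P_{\leqslant D}$ requires the \emph{reverse} control --- ruling out that some low-height $q$ is hit only by high-height $p$ --- and no application of the $\pi_{c,k}$ or of "finitely many further automorphisms of the same shape" can supply it, because these only ever tell you that $q$ is no more complex than $p$, never that $p$ is no more complex than $q$. Your parenthetical claim that the multiset of (length, coordinate)-pairs of $p$ above level $N$ "cannot strictly dominate" that of $q$ is exactly this missing reverse inclusion, asserted without proof; note also that by your own structure theorem the available permutations act identically on all atoms of a given level within a given $A_{n,i}$, so they cannot even detect multiplicities, let alone establish the domination claim. (A secondary, repairable issue: the structure theorem itself is only sketched, and its "conversely" half needs the case analysis the paper carries out for $\sigma f_n=f_n$.)

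This is precisely the difficulty the paper's proof is organized around. Rather than analyzing an arbitrary supported surjection $P\to P$, the paper assumes dual Dedekind infiniteness and invokes Truss's Lemma~8 (Lemmas~\ref{sh23} and~\ref{sh25}) to replace it by a surjection $g:P\to\{\varnothing\}\cup P$ onto a \emph{strictly larger} set whose iterated preimages of the extra point $\varnothing$ lie in exactly prescribed shells $\prod_{i<n}\prod_{j<e_i}A_{n,i,p_{i,j}(k)}$. That pins down the levels of the two witnesses $u$ and $v$ from above as well as below, which is the direction of control your argument lacks, and a single $\pi_{1,p_{0,0}(q)}$ then fixes $u$ but moves $v=g^{h(r)-h(q)}(u)$, a contradiction. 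To complete your approach you would either have to prove the onto-ness of $g\restriction P_{\leqslant D}$ by some new idea, or reintroduce the reduction via Lemma~\ref{sh23}, at which point you have essentially rejoined the paper's proof.
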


To prove this lemma, we require the following two auxiliary lemmas.

\begin{lemma}\label{sh23}
Let $X=\bigcup_{k\in\omega}X_k$, where $X_k$ is a power Dedekind finite set for all $k\in\omega$.
If $X$ is dually Dedekind infinite and $o\notin X$, then there exist a surjection $g:X\to\{o\}\cup X$
and an increasing function $r:\omega\to\omega$ such that for every $k\in\omega$,
\[
g^{-k-1}[\{o\}]\subseteq X_{r(k)}.
\]
\end{lemma}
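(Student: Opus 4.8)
The plan is to reduce Lemma~\ref{sh23} to a purely combinatorial statement and then to obtain the required surjection by ``unwinding'' a non-injective surjection of $X$ into a tree whose levels one prunes against the filtration $\langle X_k\rangle_{k\in\omega}$.

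First I would record two reductions. Replacing each $X_k$ by $\bigcup_{i\le k}X_i$, we may assume the $X_k$ are increasing (the class of power Dedekind finite sets is closed under finite unions), and we write $\ell(x)=\min\{k:x\in X_k\}$ for $x\in X$. It then suffices to produce pairwise disjoint nonempty power Dedekind finite sets $M_0,M_1,\dots\subseteq X$, indices $c_0<c_1<\cdots$ with $M_j\subseteq X_{c_j}$, and surjections $\phi_j\colon M_{j+1}\twoheadrightarrow M_j$ for each $j$. Given these, define $g\colon X\to\{o\}\cup X$ to send every point of $M_0$ to $o$, to coincide with $\phi_j$ on $M_{j+1}$, and to be the identity on $X\setminus\bigcup_jM_j$; then $g$ is a surjection onto $\{o\}\cup X$, and since the $M_j$ are pairwise disjoint and disjoint from the ``identity part'', a straightforward induction shows $g^{-j-1}[\{o\}]=M_j\subseteq X_{c_j}$, so $r(j):=c_j$ does the job.

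Next I would build the $M_j$. Since $X$ is dually Dedekind infinite, fix a non-injective surjection $h\colon X\to X$ and pick $u\neq v$ with $h(u)=h(v)=:a$. If $h(a)\neq a$, perform the harmless local modification that redefines $h(a):=a$ and $h(v):=h_{\mathrm{old}}(a)$; this keeps $h$ surjective (since $a$ is still hit by $u$) and non-injective (since now $h(u)=a=h(a)$), so we may assume $h(a)=a$, i.e.\ $a$ is a fixed point of $h$ which is also a nontrivial merge target. Now put $S_0=\{a\}$ and recursively $S_{j+1}=\{x\in X:h(x)\in S_j\}\setminus\bigcup_{i\le j}S_i$. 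By construction these ``shells'' are pairwise disjoint; one checks that each is nonempty and that $h$ restricts to a surjection $S_{j+1}\twoheadrightarrow S_j$ for every $j$ --- the point being that, because $a$ is a genuine fixed point and not merely a point lying on a longer cycle, every $y\in S_j$ really has an $h$-preimage inside $S_{j+1}$, and surjectivity of $h$ then propagates nonemptiness up the shells. Finally one prunes: set $M_0=S_0$ and $c_0=\ell(a)$; given a nonempty power Dedekind finite $M_j\subseteq S_j$, let $d(y)$ for $y\in M_j$ be the least $\ell$-value of an $h$-preimage of $y$ in $S_{j+1}$. As $M_j$ is power Dedekind finite, its image $d[M_j]\subseteq\omega$ is power Dedekind finite, hence finite; choose $c_{j+1}>c_j$ with $c_{j+1}\ge\max d[M_j]$ and set $M_{j+1}=\{x\in S_{j+1}:h(x)\in M_j\text{ and }\ell(x)\le c_{j+1}\}$. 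Then $M_{j+1}\subseteq X_{c_{j+1}}$ is power Dedekind finite, it is disjoint from $M_0,\dots,M_j$ (it lies in $S_{j+1}$), and the choice of $c_{j+1}$ ensures $h$ still restricts to a surjection of $M_{j+1}$ onto $M_j$. Feeding these data into the reduction completes the argument.

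I expect the main obstacle to be the passage from $h$ to the shells $S_j$: for an unmodified non-injective surjection, the ``basin'' of a merge target may be a short cycle supporting only finitely many nonempty shells, or the restricted maps $S_{j+1}\to S_j$ may fail to be onto, and converting a merge target into a fixed point is exactly what removes both difficulties. The remaining points --- well-definedness and surjectivity of $g$, the identity $g^{-j-1}[\{o\}]=M_j$, that the image of a power Dedekind finite set is power Dedekind finite, and that a power Dedekind finite subset of $\omega$ is finite --- are routine.
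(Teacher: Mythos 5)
Your overall architecture (reduce to a disjoint chain $M_0,M_1,\dots$ with surjections $M_{j+1}\twoheadrightarrow M_j$, obtain it from the shells of a non-injective surjection after converting a merge target into a fixed point, and prune using the fact that a power Dedekind finite subset of $\omega$ is finite) is sound, and the paper gives no proof of this lemma --- it simply cites Lemma~8 of Truss (1974) --- so a self-contained argument is welcome. But there is a genuine gap at the very first step: the reduction ``replacing each $X_k$ by $\bigcup_{i\leq k}X_i$, we may assume the $X_k$ are increasing'' is not a legitimate without-loss-of-generality move, because the conclusion of the lemma refers to the given sets $X_k$. What your construction actually delivers is $g^{-k-1}[\{o\}]\subseteq\bigcup_{i\leq r(k)}X_i$, and for a non-monotone family this is strictly weaker than containment in a single $X_{r(k)}$: your set $M_{j+1}=\{x\in S_{j+1}:h(x)\in M_j,\ \ell(x)\leq c_{j+1}\}$ collects, for each $y\in M_j$, preimages of small ``level,'' and these levels can differ from one $y$ to another, so $M_{j+1}$ need not lie in any single member of the original family.

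This is not a cosmetic issue, because the lemma is applied in Lemma~\ref{sh25} to the family $\langle\prod_{i<n}\prod_{j<e_i}X_{i,f(k)(i,j)}\rangle_{k\in\omega}$, which is not increasing (in the eventual application the sets $X_{i,l}=A_{n,i,l}$ are even pairwise disjoint), and the subsequent well-quasi-ordering argument needs each $g^{-k-1}[\{o\}]$ to sit inside a \emph{single} product $\prod_{i<n}\prod_{j<e_i}X_{i,f(r(k))(i,j)}$, so that the indices $f(r(k))$ form a sequence in $Q$ from which a componentwise-monotone subsequence can be extracted; containment in a finite union of such products does not feed into that argument without further work. So you must either prove the single-index version (which is what Truss's Lemma~8 asserts), e.g.\ by a finer pruning that keeps each level of the chain inside one $X_m$ of the original family, or show that the weaker finite-union conclusion still suffices downstream; as written, neither is done. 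The remaining steps --- the fixed-point modification of $h$, the disjointness and surjectivity of the shells, the identity $g^{-j-1}[\{o\}]=M_j$, and the finiteness of power Dedekind finite subsets of $\omega$ --- are all correct.
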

\begin{proof}
See~\cite[Lemma~8]{Truss1974}.
\end{proof}

\begin{lemma}\label{sh25}
Let $n\in\omega$. For each $i<n$, let $e_i\in\omega$ and let $X_i=\bigcup_{l\in\omega}X_{i,l}$, where $X_{i,l}$ is
power Dedekind finite for all $l\in\omega$. If $X=\prod_{i<n}X_i^{e_i}$ is dually Dedekind infinite and $o\notin X$,
then there exist a surjection $g:X\to\{o\}\cup X$ and functions $h,p_{i,j}:\omega\to\omega$ \textup{($i<n,j<e_i$)} such that
\begin{enumerate}[label=\upshape(\arabic*)]
  \item $h$ is increasing,
  \item every $p_{i,j}$ is monotonic, and at least one of $p_{i,j}$ is unbounded,
  \item for every $k\in\omega$, $g^{-h(k)-1}[\{o\}]\subseteq\prod_{i<n}\prod_{j<e_i}X_{i,p_{i,j}(k)}$.
\end{enumerate}
\end{lemma}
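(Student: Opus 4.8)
The plan is to derive Lemma~\ref{sh25} from Lemma~\ref{sh23}, viewing the former as a ``coordinatewise'' refinement of the latter. The idea is to exhibit the finite product $X=\prod_{i<n}X_i^{e_i}$ as a countable union of power Dedekind finite sets, apply Lemma~\ref{sh23} to that decomposition, and then repair the single index function it returns into one index function per coordinate by passing to a monotone subsequence.

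Concretely, I would set $E=\sum_{i<n}e_i$ and index the coordinates of $X$ by the pairs $(i,j)$ with $i<n$ and $j<e_i$, so that an element of $X$ is a tuple $(x_{i,j})$ with each $x_{i,j}\in X_i$. Since $X_i=\bigcup_{l\in\omega}X_{i,l}$ and the set of pairs $(i,j)$ is finite, we have
\[
X=\bigcup_{\vec l\in\omega^{E}}\ \prod_{i<n}\prod_{j<e_i}X_{i,l_{i,j}},
\]
and each set appearing here is a finite product of power Dedekind finite sets, hence itself power Dedekind finite (as recalled in the introduction). Fixing a bijection $\phi\colon\omega\to\omega^{E}$ and writing $X'_m=\prod_{i<n}\prod_{j<e_i}X_{i,\phi(m)_{i,j}}$, we get $X=\bigcup_{m\in\omega}X'_m$ with every $X'_m$ power Dedekind finite. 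Lemma~\ref{sh23}, applied to this union, then yields a surjection $g\colon X\to\{o\}\cup X$ and an increasing $r\colon\omega\to\omega$ with $g^{-k-1}[\{o\}]\subseteq X'_{r(k)}$ for all $k$; setting $q_{i,j}(k)=\phi(r(k))_{i,j}$ this reads $g^{-k-1}[\{o\}]\subseteq\prod_{i<n}\prod_{j<e_i}X_{i,q_{i,j}(k)}$.

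The functions $q_{i,j}$ need not be monotonic (indeed one cannot arrange this from the start, since for $E\geq 2$ no surjection $\omega\to\omega^{E}$ has all coordinate functions monotonic), so next I would pass to a subsequence. Since there are only finitely many pairs $(i,j)$ and every function $\omega\to\omega$ is monotonic on some infinite subset of $\omega$ — a fact provable without choice — iterating this finitely often gives an infinite $S\subseteq\omega$ on which all of the $q_{i,j}$ are simultaneously monotonic. Let $h\colon\omega\to\omega$ enumerate $S$ increasingly and put $p_{i,j}=q_{i,j}\circ h$. Then $h$ is increasing, each $p_{i,j}$ is monotonic, and, as $h(k)\in S$,
\[
g^{-h(k)-1}[\{o\}]\subseteq\prod_{i<n}\prod_{j<e_i}X_{i,q_{i,j}(h(k))}=\prod_{i<n}\prod_{j<e_i}X_{i,p_{i,j}(k)},
\]
so conditions (1) and (3) hold.

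It remains to check that at least one $p_{i,j}$ is unbounded, which I expect to be the one point requiring a small argument. I would argue by contradiction: if every $p_{i,j}$ were bounded, then the tuple $(p_{i,j}(k))_{(i,j)}$, which equals $\phi(r(h(k)))$, would take only finitely many values; as $\phi$ is injective, $r\circ h$ would then have finite range, contradicting the fact that $r\circ h$ is strictly increasing. Thus (2) holds as well. The only genuinely new content beyond Lemma~\ref{sh23} is this bookkeeping — turning one countable‑union decomposition and one index function into one monotone index function per coordinate — which the reindexing through $\phi$ together with the monotone‑subsequence extraction accomplish; the remainder is a direct appeal to Truss's lemma.
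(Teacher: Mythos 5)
Your proof is correct and follows essentially the same route as the paper's: the same reindexing of $X$ as a countable union of power Dedekind finite finite products, the same appeal to Lemma~\ref{sh23}, and the same passage to a componentwise monotone subsequence of the resulting index tuples. The only cosmetic difference is that the paper obtains that subsequence by citing the well-quasi-ordering of the index tuples under the componentwise order, whereas you unpack this into the iterated monotone-subsequence extraction (which is exactly the standard proof of that fact), and your unboundedness argument via injectivity of the reindexing bijection is the paper's one-line remark spelled out.
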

\begin{proof}
Let $I=\{(i,j)\mid i<n,j<e_i\}$, let $Q$ be the set of all functions from $I$ to $\omega$,
and let $f$ be a bijection between $\omega$ and $Q$. Note that
\[
X=\prod_{i<n}X_i^{e_i}=\bigcup_{q\in Q}\prod_{i<n}\prod_{j<e_i}X_{i,q(i,j)}=\bigcup_{k\in\omega}\prod_{i<n}\prod_{j<e_i}X_{i,f(k)(i,j)}.
\]
Since power Dedekind finite sets are closed under finite products, it follows that
$\prod_{i<n}\prod_{j<e_i}X_{i,f(k)(i,j)}$ is power Dedekind finite for all $k\in\omega$.
Thus, by Lemma~\ref{sh23}, there exist a surjection $g:X\to\{o\}\cup X$ and
an increasing function $r:\omega\to\omega$ such that for every $k\in\omega$,
\begin{equation}\label{sh11}
g^{-k-1}[\{o\}]\subseteq\prod_{i<n}\prod_{j<e_i}X_{i,f(r(k))(i,j)}.
\end{equation}

Let $R$ be the componentwise ordering of $Q$; that is, for all $p,q\in Q$,
$pRq$ if and only if $p(i,j)\leqslant q(i,j)$ for all $(i,j)\in I$.
It is well known that $R$ is a well-quasi-ordering of $Q$,
and hence there exists an increasing function $h:\omega\to\omega$
such that $f(r(h(k)))\mathrel{R}f(r(h(k+1)))$ for all $k\in\omega$.
For each $(i,j)\in I$, let $p_{i,j}:\omega\to\omega$ be defined by $p_{i,j}(k)=f(r(h(k)))(i,j)$.
Then every $p_{i,j}$ is monotonic. Since $r$ is increasing, at least one of $p_{i,j}$ is unbounded.
Finally, it follows from \eqref{sh11} that $g^{-h(k)-1}[\{o\}]\subseteq\prod_{i<n}\prod_{j<e_i}X_{i,p_{i,j}(k)}$ for all $k\in\omega$.
\end{proof}

\begin{proof}[Proof of Lemma~\ref{sh22}]
By symmetry, without loss of generality, assume that $e_n=0$. For each $l\in\omega$, let
\[
A_{n,i,l}=\{[i^\smallfrown s]_n\in A_{n,i}\mid s\text{ has length }l\}.
\]
Then each $A_{n,i,l}$ is a finite set in $\mathcal{V}$ and $A_{n,i}=\bigcup_{l\in\omega}A_{n,i,l}$.
Assume toward a contradiction that $\prod_{i<n}A_{n,i}^{e_i}$ is dually Dedekind infinite.
By Lemma~\ref{sh25}, in $\mathcal{V}$, there exist a surjection $g:\prod_{i<n}A_{n,i}^{e_i}\to\{\varnothing\}\cup\prod_{i<n}A_{n,i}^{e_i}$
and functions $h,p_{i,j}:\omega\to\omega$ ($i<n,j<e_i$) such that
\begin{enumerate}
  \item $h$ is increasing,
  \item every $p_{i,j}$ is monotonic, and at least one of $p_{i,j}$ is unbounded,
  \item for every $k\in\omega$,
        \begin{equation}\label{sh10}
          g^{-h(k)-1}[\{\varnothing\}]\subseteq\prod_{i<n}\prod_{j<e_i}A_{n,i,p_{i,j}(k)}.
        \end{equation}
\end{enumerate}
Without loss of generality, assume that $p_{0,0}$ is unbounded. Let
\begin{align*}
  B & =\{(i,j)\mid p_{i,j}\text{ is bounded}\}, \\
  C & =\{(i,j)\mid p_{i,j}\text{ is unbounded}\}.
\end{align*}
Then $(0,0)\in C$. For each $(i,j)\in B$, let $b_{i,j}$ be the least upper bound of $\ran(p_{i,j})$.

Since $g\in\mathcal{V}$, there exists a finite support $E\subseteq A$ for $g$.
Since $p_{0,0}$ is unbounded, there is a least $q\in\omega$ such that $p_{0,0}(q)>0$,
$p_{0,0}(q)>b_{i,j}$ for all $(i,j)\in B$, and $E\cap A_n\subseteq\bigcup_{i\leqslant n}\bigcup_{l<p_{0,0}(q)}A_{n,i,l}$.
We define a permutation $\sigma$ of $A$ as follows:
\begin{align*}
  \sigma[i^\smallfrown s^\smallfrown k]_n               & =[i^\smallfrown s^\smallfrown k']_n, \\
  \sigma[n^\smallfrown s^\smallfrown k^\smallfrown t]_n & =[n^\smallfrown s^\smallfrown k'{}^\smallfrown t]_n,
\intertext{where $i<n$, $s$ is a finite string of length $p_{0,0}(q)-1$, $t$ is a nonempty finite string, and $k'\equiv k+1\pmod{n+1}$,}
  \sigma a                                              & =a,\quad\text{otherwise.}
\end{align*}

Clearly, $\sigma$ satisfies conditions (1) and (2) in the definition of $\mathcal{G}$.
Since $\sigma$ is the identity on $A\setminus A_n$, once we prove $\sigma f_n=f_n$,
it follows that $\sigma\in\mathcal{G}$, which allows us to derive a contradiction as follows.

Let $r$ be the least natural number such that $r>q$ and $p_{i,j}(r)>p_{0,0}(q)$ for all $(i,j)\in C$.
Since $g$ is surjective, there are $u,v\in\prod_{i<n}A_{n,i}^{e_i}$ such that $g^{h(q)+1}(v)=\varnothing$
and $g^{h(r)-h(q)}(u)=v$. Then $g^{h(r)+1}(u)=\varnothing$. Hence, by~\eqref{sh10},
\begin{align*}
  u & \in\prod_{i<n}\prod_{j<e_i}A_{n,i,p_{i,j}(r)}, \\
  v & \in\prod_{i<n}\prod_{j<e_i}A_{n,i,p_{i,j}(q)}.
\end{align*}
For all $(i,j)\in B$, $p_{i,j}(r)\leqslant b_{i,j}<p_{0,0}(q)$, and for all $(i,j)\in C$, $p_{i,j}(r)>p_{0,0}(q)$.
Therefore, by the definition of $\sigma$, $\sigma u=u$. Since $E\cap A_n\subseteq\bigcup_{i\leqslant n}\bigcup_{l<p_{0,0}(q)}A_{n,i,l}$,
it follows that $\sigma$ fixes $E$ pointwise, and thus $\sigma g=g$, which implies that $\sigma v=v$ since $v=g^{h(r)-h(q)}(u)$.
But $v\in\prod_{i<n}\prod_{j<e_i}A_{n,i,p_{i,j}(q)}$, and by the definition of $\sigma$, $\sigma v\neq v$, which is a contradiction.

It still remains to show that $\sigma f_n=f_n$. Take an arbitrary $x\in\prod_{i\leqslant n}A_{n,i}$.
We have to prove that $\sigma(f_n(x))=f_n(\sigma x)$. Consider the following three cases.

\textsc{Case~1}. $x$ is of the form $([0j_0]_n,\dots,[nj_n]_n)$.
Then $\sigma x$ is also of this form. Hence $\sigma(f_n(x))=\varnothing=f_n(\sigma x)$.

\textsc{Case~2}. $x$ is of the form $([0^\smallfrown s_0{}^\smallfrown k_0l_0]_n,\dots,[n^\smallfrown s_n{}^\smallfrown k_nl_n]_n)$,
where $s_0,\dots,s_n$ are finite strings of the same length, say $l$.
Let $f_n(x)=([0^\smallfrown s_0{}^\smallfrown m_0]_n,\dots,[n^\smallfrown s_n{}^\smallfrown m_n]_n)$,
where $m_i\equiv\sum_{j\neq i}k_j\pmod{n+1}$ for all $i\leqslant n$. Consider further the following four subcases.

\textsc{Case~2a}. $l<p_{0,0}(q)-2$. Then $\sigma x=x$, and hence $\sigma(f_n(x))=f_n(x)=f_n(\sigma x)$.

\textsc{Case~2b}. $l=p_{0,0}(q)-2$.
Then $\sigma x=([0^\smallfrown s_0{}^\smallfrown k_0l_0']_n,\dots,[n^\smallfrown s_n{}^\smallfrown k_nl_n']_n)$,
where $l_i'\equiv l_i+1\pmod{n+1}$ for all $i<n$ and $l_n'=l_n$. Thus $f_n(\sigma x)=f_n(x)=\sigma(f_n(x))$.

\textsc{Case~2c}. $l=p_{0,0}(q)-1$. Then
\begin{align*}
  \sigma x       & =([0^\smallfrown s_0{}^\smallfrown k_0'l_0]_n,\dots,[n^\smallfrown s_n{}^\smallfrown k_n'l_n]_n), \\
  \sigma(f_n(x)) & =([0^\smallfrown s_0{}^\smallfrown m_0']_n,\dots,[n^\smallfrown s_n{}^\smallfrown m_n']_n),
\end{align*}
where $k_i'=k_i$ and $m_i'\equiv m_i+1\pmod{n+1}$ for all $i<n$, $k_n'\equiv k_n+1\pmod{n+1}$, and $m_n'=m_n$.
It is easy to verify that $m_i'\equiv\sum_{j\neq i}k_j'\pmod{n+1}$ for all $i\leqslant n$. Hence, $f_n(\sigma x)=\sigma(f_n(x))$.

\textsc{Case~2d}. $l\geqslant p_{0,0}(q)$.
Then
\begin{align*}
  \sigma x       & =([0^\smallfrown s_0'{}^\smallfrown k_0l_0]_n,\dots,[n^\smallfrown s_n'{}^\smallfrown k_nl_n]_n), \\
  \sigma(f_n(x)) & =([0^\smallfrown s_0'{}^\smallfrown m_0]_n,\dots,[n^\smallfrown s_n'{}^\smallfrown m_n]_n),
\end{align*}
where $s_i'=s_i$ for all $i<n$ and $s_n'$ is obtained from $s_n$ by replacing its $p_{0,0}(q)$-th term $j$
with the remainder of $j+1$ module $n+1$. Hence, $f_n(\sigma x)=\sigma(f_n(x))$.

\textsc{Case~3}. Otherwise, it is easy to see that $\sigma(f_n(x))=\sigma x=f_n(\sigma x)$.
\end{proof}

Now, we are ready to prove the main theorem.

\begin{theorem}\label{sh26}
In $\mathcal{V}$, for every $n\in\omega$, $A_n^n$ is dually Dedekind finite, and $A_n^{n+1}$ is dually Dedekind infinite.
\end{theorem}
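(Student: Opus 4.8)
The plan is to derive Theorem~\ref{sh26} from Lemmas~\ref{sh21} and~\ref{sh22}, together with two soft facts about dual Dedekind finiteness: that the property passes to supersets, and that it is preserved under finite unions (the latter being recorded in the introduction). Since everything happens inside $\mathcal{V}$, one has to check at each step that the maps, products, and partitions invoked actually lie in $\mathcal{V}$; this will be automatic, because all of them are definable from the sets $A_n$, $A_{n,i}$, and $f_n$, which belong to $\mathcal{V}$ with empty support.

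For the dually Dedekind infinite half: $\prod_{i\leqslant n}A_{n,i}$ is a subset of $A_n^{n+1}$, since its elements are functions on $\{0,\dots,n\}$ whose $i$-th value lies in $A_{n,i}\subseteq A_n$. By Lemma~\ref{sh21}, $\prod_{i\leqslant n}A_{n,i}$ is dually Dedekind infinite in $\mathcal{V}$, and a witnessing surjection for it extends, by the identity on $A_n^{n+1}\setminus\prod_{i\leqslant n}A_{n,i}$, to a witnessing surjection for $A_n^{n+1}$ that again lies in $\mathcal{V}$. Hence $A_n^{n+1}$ is dually Dedekind infinite in $\mathcal{V}$.

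For the dually Dedekind finite half, the point is a pigeonhole count. Because $A_n=\bigsqcup_{i\leqslant n}A_{n,i}$, the set $A_n^n$ splits, in $\mathcal{V}$, into the finitely many pieces $S_\phi=\{(x_0,\dots,x_{n-1})\in A_n^n\mid x_j\in A_{n,\phi(j)}\text{ for every }j<n\}$ indexed by the functions $\phi\colon n\to n+1$. After reindexing coordinates, each $S_\phi$ is in bijection, within $\mathcal{V}$, with $\prod_{i\leqslant n}A_{n,i}^{e_i}$, where $e_i$ counts the $j<n$ with $\phi(j)=i$. Since $\sum_{i\leqslant n}e_i=n<n+1$, some $e_i$ is $0$, so Lemma~\ref{sh22} applies and yields that $\prod_{i\leqslant n}A_{n,i}^{e_i}$, and hence $S_\phi$, is dually Dedekind finite. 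As the class of dually Dedekind finite sets is closed under finite unions, $A_n^n=\bigcup_\phi S_\phi$ is dually Dedekind finite in $\mathcal{V}$.

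No real obstacle is expected at this point, since all the substantive work was already done in Lemma~\ref{sh22} (and its supporting Lemmas~\ref{sh23} and~\ref{sh25}). What remains is only the routine checking that the auxiliary objects lie in $\mathcal{V}$ and, if one wants to be scrupulous, the degenerate case $n=0$, where the finite half merely says that the singleton $A_0^0$ is dually Dedekind finite and the infinite half is the case $n=0$ of Lemma~\ref{sh21}.
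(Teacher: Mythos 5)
Your proposal is correct and follows essentially the same route as the paper: the superset $\prod_{i\leqslant n}A_{n,i}\subseteq A_n^{n+1}$ together with Lemma~\ref{sh21} for the infinite half, and the decomposition of $A_n^n$ into finitely many pieces each equivalent to some $\prod_{i\leqslant n}A_{n,i}^{e_i}$ with $\sum_i e_i=n$ (hence some $e_i=0$), Lemma~\ref{sh22}, and closure under finite unions for the finite half. The extra remarks about supports and the case $n=0$ are harmless refinements of the same argument.
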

\begin{proof}
Note that $A_n=\bigcup_{i\leqslant n}A_{n,i}$. Hence, by Lemma~\ref{sh21},
$A_n^{n+1}\supseteq\prod_{i\leqslant n}A_{n,i}$ is dually Dedekind infinite. Note also that
\[
A_n^n=\bigcup_{j_0,\dots,j_{n-1}\leqslant n}\prod_{k<n}A_{n,j_k},
\]
and for every $j_0,\dots,j_{n-1}\leqslant n$,
\[
\prod_{k<n}A_{n,j_k}\approx\prod_{i\leqslant n}A_{n,i}^{e_i},
\]
where $e_i=|\{k<n\mid i=j_k\}|$ for all $i\leqslant n$.
Since $e_0+\dots+e_n=n$, at least one of $e_0,\dots,e_n$ is zero.
Hence, by Lemma~\ref{sh22}, $\prod_{i\leqslant n}A_{n,i}^{e_i}$ is dually Dedekind finite.
Since dually Dedekind finite sets are closed under finite unions, it follows that $A_n^n$ is dually Dedekind finite.
\end{proof}

Finally, Theorem~\ref{msmain} follows immediately from Theorem~\ref{sh26} and the Jech--Sochor transfer theorem.

\end{document}